\documentclass[10pt]{article}

\usepackage[a4paper, total={6in, 9in}]{geometry}
\usepackage{color}
\usepackage{graphicx}
\usepackage{latexsym}
\usepackage{amsfonts,amsmath,amssymb,amsthm}

\newtheorem{theorem}{Theorem}

\newcommand{\ord}{\mathrm{ord}}

\begin{document}

\author{Istv\'an Mez\H{o}\thanks{The research of Istv\'an Mez\H{o} was supported by the Scientific Research Foundation of Nanjing University of Information Science \& Technology, the Startup Foundation for Introducing Talent of NUIST. Project no.: S8113062001, and the National Natural Science Foundation for China. Grant no. 11501299.}\\Department of Mathematics\\Nanjing University of Information Science and Technology\\
Nanjing, 210044, P. R. China}

\title{The $p$-adic Lambert $W$ function}
\maketitle

\begin{abstract}
In this paper we introduce the $p$-adic analogue of the Lambert $W$ function, and study its main properties.
\end{abstract}

\section{Introduction}

The classical Lambert $W$ function is a special function gaining more and more interest from mathematicians and physicists. It can be defined on the whole complex plane by the transcendental equation
\begin{equation}
W(z)e^{W(z)}=z.\label{W_def}
\end{equation}
As this equation has infinitely many solutions (except when $z=0$), the $W$ function has infinitely many branches. See the basic source \cite{W} for the main properties of $W$, and \cite{W,Houari,Mezo,Valluri} for references on the many applications $W$ has.

In the present paper we define the $p$-adic analogue of the Lambert $W$ function, and study its properties. The proof of the results are slightly less trivial than in the classical and well known case of the $p$-adic exponential function $\exp_p(x)$ and its inverse $\log_p(x)$. Thus the study of this new function has demonstrative power.

Let $\Omega_p$ be the algebraically and topologically closed (and spherically complete) $p$-adic field\footnote{We will not need this generality, however. Our analysis works even on $\mathbb{Q}_p$.} for a prime $p$. Based on \eqref{W_def}, we define $W_p(x)$ to be a function on (a part of) $\Omega_p$ such that
\[W_p(x)\exp_p(W_p(x))=x.\]

The theory of $p$-adic functions dictates that the inversion of $xe^x$ and $x\exp_p(x)$ results in the same Taylor series, only the radius of convergence changes when we switch from the standard topology generated by $|\cdot|$ to the $p$-adic topology based on $|\cdot|_p$. Thus, we have that $W_p(x)$ is represented by the very same Taylor series as the classical Lambert $W$ function:
\begin{equation}
W_p(x)=\sum_{n=1}^\infty\frac{(-n)^{n-1}}{n!}x^n.\label{Wp}
\end{equation}

First we study the basic mapping properties of $W_p$, then we prove that it cannot be represented as a uniform limit of rational $p$-adic functions.

\section{The basic mapping properties of $W_p$}

We prove the following theorem.

\begin{theorem}The series \eqref{Wp} defining $W_p$ is convergent whenever $x\in\Omega_p$ such that $|x|_p<p^{-\frac{1}{p-1}}=:r_p$, and it is divergent elsewhere. Moreover, it is true that
\[|W_p(x)|_p=|x|_p\quad(|x|_p<r_p).\]
Thus $W_p$ has no zeros on its domain of definition except $x=0$.

For the growth modulus we have that
\[M_r(W_p)\stackrel{\mathrm{def}}{=}\max_{n\ge1}\left|\frac{(-n)^{n-1}}{n!}\right|_pr^n=r\quad(0<r<r_p),\]
and $W_p$ has no critical radius\footnote{A critical radius of a function $f(x)=\sum_{n\ge0}a_nx^n$ is a positive real number $r$ such that $|a_n|_p\cdot r^n=|a_m|_p\cdot r^m$ for some $n,m$ such that $n\neq m$. Critical radii are important in the study of zeros of a $p$-adic function, but, as we see, $W_r$ has no non-trivial zeros. More on the critical radii can be read in \cite[6.1-6.2]{Robert}}.
\end{theorem}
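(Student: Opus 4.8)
The plan is to reduce every assertion to a single valuation estimate for the coefficients $a_n=\frac{(-n)^{n-1}}{n!}$. Writing $\ord_p$ for the additive $p$-adic valuation normalized by $|x|_p=p^{-\ord_p(x)}$, I would first record that
\[
\ord_p(a_n)=(n-1)\ord_p(n)-\ord_p(n!),
\]
and then invoke Legendre's formula $\ord_p(n!)=\frac{n-s_p(n)}{p-1}$, where $s_p(n)$ is the sum of the base-$p$ digits of $n$. To locate the radius of convergence I would compute $\liminf_{n\to\infty}\frac{1}{n}\ord_p(a_n)$: since $\ord_p(n)\ge 0$ and $\frac{1}{n}\ord_p(n!)\to\frac{1}{p-1}$, the liminf equals $-\frac{1}{p-1}$ (attained along the integers prime to $p$), whence the radius of convergence is $p^{-1/(p-1)}=r_p$. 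For divergence on the boundary circle $|x|_p=r_p$ I would exhibit a subsequence along which the general term does not tend to $0$; the values $n=p^k+1$ are convenient because they are prime to $p$ and have constant digit sum $s_p(n)=2$.

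The engine for the remaining claims is the identity, valid for all $n\ge1$,
\[
\ord_p(a_n)+\frac{n-1}{p-1}=(n-1)\ord_p(n)+\frac{s_p(n)-1}{p-1},
\]
which follows by substituting Legendre's formula. The right-hand side is a sum of two nonnegative quantities, and it vanishes exactly when $\ord_p(n)=0$ and $s_p(n)=1$ simultaneously; since $s_p(n)=1$ forces $n$ to be a power of $p$, the only solution is $n=1$. Consequently, for every $n\ge2$ the quantity is strictly positive, which gives $|a_n|_p\,r_p^{\,n-1}<1$ and hence $|a_n x^n|_p<|x|_p$ for all $x$ with $0<|x|_p<r_p$. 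Because we are in a non-archimedean field, the term of largest absolute value dominates: the $n=1$ term $x$ is strictly larger than every other term, so $|W_p(x)|_p=|x|_p$, and in particular $W_p$ vanishes only at $x=0$.

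The growth modulus and the absence of critical radii are then immediate corollaries of the same strict domination. For $0<r<r_p$ the displayed identity together with $r<r_p$ yields $|a_n|_p r^{n}<|a_1|_p r=r$ for every $n\ge2$, so the maximum defining $M_r(W_p)$ is attained uniquely at $n=1$ and equals $r$; the uniqueness of the maximizing index for every $r$ in the domain is exactly the statement that no two monomials ever tie, i.e.\ that $W_p$ has no critical radius. I expect the only real work to be the valuation estimate of the first paragraph: once the identity relating $\ord_p(a_n)$, $\ord_p(n)$ and $s_p(n)$ is in place, the positivity analysis (and the pinpointing of $n=1$ as the unique degenerate case) is the crux, and the convergence, modulus, and zero statements all fall out of it.
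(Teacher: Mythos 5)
Your treatment of the convergence radius, of the identity $|W_p(x)|_p=|x|_p$, and of the growth modulus is correct and in substance the same as the paper's: everything reduces, in both arguments, to Legendre's formula $\ord_p(n!)=\frac{n-s_p(n)}{p-1}$ (your $s_p(n)$ is the paper's $S_n$). Your identity
\[
\ord_p(a_n)+\frac{n-1}{p-1}=(n-1)\ord_p(n)+\frac{s_p(n)-1}{p-1}
\]
is a repackaging of the paper's inequality $|n!|_p\ge r_p^{n-1}$ combined with $|(-n)^{n-1}|_p\le1$, with the added virtue of making the equality case ($n=1$ only) explicit; and your divergent subsequence $n=p^k+1$ on the boundary circle is exactly what the paper's appeal to the divergence of $\exp_p$ at $|x|_p=r_p$ boils down to.

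The one place where you genuinely depart from the paper is the critical-radius claim, and there you overstate what your argument establishes. The paper deduces the absence of critical radii from the correspondence between critical radii and the absolute values of zeros, zeros having already been ruled out; you instead argue directly from the uniqueness of the maximizing index. The direct route is legitimate and more self-contained, but your assertion that uniqueness of the maximizer ``is exactly the statement that no two monomials ever tie'' is false: non-maximal monomials can and do tie. For $p=3$ one has $a_3=3/2$ and $a_4=-8/3$, so $|a_3|_3\,r^3=|a_4|_3\,r^4$ at $r=3^{-2}$, which lies inside $(0,r_3)$ since $r_3=3^{-1/2}$; at that radius both of these terms equal $3^{-7}$, while the maximal term is $|a_1|_3\,r=3^{-2}$. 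What saves your conclusion (and the theorem itself) is that a critical radius in Robert's sense requires the tie to occur \emph{at the maximum}, i.e.\ $M_r(W_p)$ must be attained by at least two indices --- and that is precisely what your unique-maximizer argument excludes. Note that the paper's footnote, read literally (``$|a_n|_p\,r^n=|a_m|_p\,r^m$ for some $n\neq m$''), would make the theorem false at $r=3^{-2}$ for $p=3$, so the footnote's paraphrase of the definition is too loose; once ``tie'' is understood as ``tie at the maximum,'' your argument is valid and avoids invoking the zeros-versus-critical-radii correspondence that the paper relies on. You should, however, state it that way rather than claim that no ties of any kind occur.
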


\begin{proof}That the radius of convergence for \eqref{Wp} is $r_p$ can be seen easily: for $n$ which is not a multiple of $p$, $\left|\frac{(-n)^{n-1}}{n!}\right|_p=\left|\frac{1}{n!}\right|_p$, which shows that for such $n$ \eqref{Wp} contains a partial series of the $p$-adic exponential
\[\exp_p(x)=\sum_{n\ge0}\frac{1}{n!}x^n,\]
and it is well known that this series' radius of convergence is $r_p$ (\cite[p. 251]{Robert}, \cite[p. 79]{Koblitz}), and on the radius $|x|_p=r_p$ the series in question is divergent.

That $|W_p(x)|_p=|x|_p$ can be seen as follows. We take the Taylor series, and we show that the first term, $x$, dominates the rest in absolute value. To this end, let us fix $n>1$, fix an $x$ such that $|x|_p<r_p$, and carry out the estimation
\[\left|\frac{(-n)^{n-1}}{n!}x^n\right|_p=|x|_p\left|\frac{(-n)^{n-1}x^{n-1}}{n!}\right|_p\le|x|_p\left|\frac{x^{n-1}}{n!}\right|_p\le|x|_p\frac{|x^{n-1}|_p}{r_p^{n-1}}=|x|_p\left(\frac{|x|_p}{r_p}\right)^{n-1}<|x|_p.\]
Here we used the simple fact that $|n!|_p\ge r_p^{n-1}$:
\[|n!|_p=p^{-\frac{n-S_n}{p-1}}\ge p^{-\frac{n-1}{p-1}}=r_p^{n-1}.\]
($S_n$ is the sum of the digits of $n$ in base $p$.)

The statement on the growth modulus is proven by considering the following steps.
\[M_r(W_p)=\max_{n\ge1}\left|\frac{(-n)^{n-1}}{n!}\right|_pr^n=\max_{n\ge1}p^{-(n-1)\ord_p(-n)+\ord_p(n!)}\cdot r^n=\]
\[\max_{n\ge1}p^{-(n-1)\ord_p(n)+\frac{n-S_n}{p-1}}\cdot r^n=\max_{n\ge1}\left(p^{\frac{1}{p-1}}r\right)^np^{-\frac{S_n}{p-1}}p^{-(n-1)\ord_p(n)}.\]
Now it can be seen that the maximum is attained when $n=1$, so
\[M_r(W_p)=p^{\frac{1}{p-1}}\cdot r\cdot p^{-\frac{1}{p-1}}=r.\]

As critical radii characterize the absolute value of the zeros of an analytic function, and $w_p$ has no zeros of positive magnitude, it follows that $W_p$ has no critical radii.

All the statements of the theorem are proved.
\end{proof}

\section{$W_p(\pi x)$ is not an analytic element}

An analytic element is a function that can be represented as a uniform limit of rational functions \cite[p. 342]{Robert}. The Christol-Robba theorem provides a condition which often helps to decide whether a given $p$-adic analytic function is an analytic element or not. Let $f=\sum_{n\ge0}a_nx^n$ be a formal power series with bounded coefficients. Define $p_\nu=p^\nu(p^\nu-1)$ for $\nu\ge1$. Then $f$ defines an analytic element on
\[\mathbf{M}_p=\{x\in\Omega_p\mid|x|_p<1\}\]
if and only if the following condition holds. For each $\varepsilon>0$ there exists $\nu$ and $N$ positive integers such that
\[|a_{n+p_\nu}-a_n|_p<\varepsilon\]
for all $n\ge N$. This condition is called the Christol-Robba condition.

Since
\[\left|\frac{(-n)^{n-1}}{n!}p^{\frac{n}{p-1}}\right|_p\le\left|\frac{1}{n!}p^{\frac{n}{p-1}}\right|_p=p^{-\frac{n}{p-1}+\frac{n-S_n}{p-1}}=p^{\frac{-S_n}{p-1}}\le p^{-\frac{1}{p-1}},\]
the Taylor coefficients of $W_p(\pi x)$ are bounded, and the Cristol-Robba theorem can be applied to $W_p(\pi x):\mathbf{M}_p\to\mathbf{M}_p$. Here $\pi$ is a $p$-adic number used to rescale the arguments such that $W_p$ can be defined on the open unit disk $\mathbf{M}_p$ of $\Omega_p$. It can be choosen to be any of the roots of the equation $x^{p-1}-p=0$.

We are now ready to prove the following statement.

\begin{theorem}The function $W_p(\pi x):\mathbf{M}_p\to\mathbf{M}_p$ is not an analytic element.
\end{theorem}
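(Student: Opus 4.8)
The plan is to show that the Christol--Robba condition \emph{fails} for the coefficient sequence of $W_p(\pi x)$. Writing $W_p(\pi x)=\sum_{n\ge1}a_nx^n$ with $a_n=\frac{(-n)^{n-1}}{n!}\pi^n$, the condition I must negate states that for every $\varepsilon>0$ there exist $\nu,N$ with $|a_{n+p_\nu}-a_n|_p<\varepsilon$ for all $n\ge N$. Its negation is what I would establish: a single $\varepsilon_0>0$ such that for \emph{every} $\nu$ and \emph{every} $N$ there is some $n\ge N$ with $|a_{n+p_\nu}-a_n|_p\ge\varepsilon_0$.

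The first step is to record an exact formula for the absolute values of the coefficients. Using $|\pi|_p=p^{-1/(p-1)}$, the identity $|(-n)^{n-1}|_p=p^{-(n-1)\ord_p(n)}$, and the Legendre formula $\ord_p(n!)=\frac{n-S_n}{p-1}$, the $\pi^n$ factor cancels the $\frac{n}{p-1}$ contribution and one obtains
\[|a_n|_p=p^{-(n-1)\ord_p(n)-\frac{S_n}{p-1}}.\]
In particular, whenever $p\nmid n$ this collapses to $|a_n|_p=p^{-S_n/(p-1)}$, so the absolute value is governed purely by the base-$p$ digit sum. The second ingredient is the ultrametric inequality: if $|a_{n+p_\nu}|_p\neq|a_n|_p$, then $|a_{n+p_\nu}-a_n|_p=\max(|a_{n+p_\nu}|_p,|a_n|_p)$. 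Thus it suffices to produce arbitrarily large $n$ for which the two valuations \emph{differ} while the larger of the two absolute values stays bounded away from $0$ uniformly in $\nu$.

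The construction I would use is $n=1+p^{M}$ with $M\ge2\nu$ chosen large enough that $n\ge N$. Since $n\equiv1\pmod p$ and $p_\nu\equiv0\pmod p$, both $n$ and $n+p_\nu$ are prime to $p$, so the $\ord_p$ terms drop out. The base-$p$ digit of $p^{M}$ sits in position $M$, while the nonzero digits of $p_\nu=p^{2\nu}-p^{\nu}$ occupy positions $\nu,\dots,2\nu-1$ (each equal to $p-1$); for $M\ge2\nu$ these ranges are disjoint from position $M$ and from position $0$, so no carries occur. Hence $S_n=2$ while $S_{n+p_\nu}=2+\nu(p-1)$, giving $|a_n|_p=p^{-2/(p-1)}$ and $|a_{n+p_\nu}|_p=p^{-2/(p-1)-\nu}$. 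These are unequal, so the ultrametric inequality yields
\[|a_{n+p_\nu}-a_n|_p=p^{-2/(p-1)},\]
and this holds for arbitrarily large $n$. Taking $\varepsilon_0=p^{-2/(p-1)}$ (independent of $\nu$) then contradicts the Christol--Robba condition, so $W_p(\pi x)$ is not an analytic element.

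The step I expect to be most delicate is the uniformity in $\nu$: the witness $\varepsilon_0$ must be fixed once and for all, yet the perturbation $p_\nu$ varies with $\nu$. The argument survives because adding $p_\nu$ to $1+p^{M}$ only \emph{inflates} the digit sum by the positive amount $\nu(p-1)$ without introducing carries, so the smaller coefficient shrinks while the larger one is pinned at $p^{-2/(p-1)}$ regardless of $\nu$. Verifying the absence of carries (equivalently, the disjointness of the relevant digit positions) is the only computation requiring care, and it is guaranteed by the single inequality $M\ge2\nu$.
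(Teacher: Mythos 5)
Your proposal is correct and takes essentially the same route as the paper: the paper likewise negates the Christol--Robba condition with the witness family $n=p^\alpha k+1$, $\alpha>2\nu$ (yours is the case $k=1$), uses the carry-free digit-sum identity $S_{n+p_\nu}=S_n+\nu(p-1)$, and concludes $|a_{n+p_\nu}-a_n|_p=|a_n|_p$, which it obtains by showing the ratio $a_{n+p_\nu}/a_n$ has valuation $\nu$, while you obtain it from the ultrametric equality $|a_{n+p_\nu}-a_n|_p=\max(|a_{n+p_\nu}|_p,|a_n|_p)$ valid when the two absolute values differ. One small point in your favor: your lower bound $p^{-2/(p-1)}$ is the accurate one, whereas the paper's claimed $p^{-S_n/(p-1)}\ge p^{-1/(p-1)}$ has the inequality reversed (its family forces $S_n\ge 2$), a slip your explicit choice $n=p^M+1$ avoids since it pins $S_n=2$ exactly.
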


\begin{proof}Let us fix an $1>\varepsilon>0$, and a $\nu$ as it is given in the CR-theorem. If we prove that there are infinitely many $n$ such that $|a_{n+p_\nu}-a_n|_p\not\le\varepsilon$ with $a_n=\frac{(-n)^{n-1}}{n!}p^{\frac{n}{p-1}}$, then we will be done. In fact, we will prove that this absolute value is always bounded from below. To this end, we fix $n=p^\alpha k+1$ such that $\alpha>2\nu$. Then note that
\begin{align}
\ord_p(n)&=\ord_p(n+p_\nu)=0,\label{nobs1}\\
\intertext{and}
S_{n+p_\nu}&=S_n+S_{p_\nu}.\label{nobs2}
\end{align}
Now consider
\begin{equation}
|a_{n+p_\nu}-a_n|_p=\left|p^{\frac{n}{p-1}}\frac{(-n)^{n-1}}{n!}\right|_p\left|(-1)^{p_\nu}\left(\frac{n+p_\nu}{n}\right)^{n-1}(n+p_\nu)^{p_\nu}p^{\frac{p_\nu}{p-1}}\frac{n!}{(n+p_\nu)!}-1\right|_p.\label{CRcond}
\end{equation}
Since the chosen $n$s are not divisible by $p$, we have that
\[\left|p^{\frac{n}{p-1}}\frac{(-n)^{n-1}}{n!}\right|_p=\left|\frac{p^{\frac{n}{p-1}}}{n!}\right|_p=p^{-\frac{S_n}{p-1}}\ge p^{-\frac{1}{p-1}}.\]
Moreover, the $p$-adic order of the first expression in the second absolute value of \eqref{CRcond} is equal to
\[(n-1)\ord_p\left(\frac{n+p_\nu}{n}\right)+p_\nu\ord_p(n+p\nu)+\frac{p_\nu}{p-1}+\frac{1}{p-1}\left(n-S_n-(n+p_\nu)+S_{n+p\nu}\right).\]
By the observations \eqref{nobs1}-\eqref{nobs2} this simplifies to
\[\frac{1}{p-1}S_{p_\nu}=\nu.\]
This last equality is trivial:
\[S_{p_\nu}=S_{p^\nu(p^\nu-1)}=S_{p^\nu-1}=S_{(p-1)p^0+(p-1)p^1+\cdots+(p-1)p^{\nu-1}}=\nu(p-1).\]
We therefore have that the first term in the second absolute value of \eqref{CRcond} is divisible by $p^\nu$, so the $p$-adic number in the absolute value is a unit.

Collecting all the information we get that
\[|a_{n+p_\nu}-a_n|_p\ge p^{-\frac{1}{p-1}},\]
thus it is indeed bounded from below, as we claimed.
\end{proof}

\section*{Acknowledgement}

The author is grateful to Huang XuePing for raising the question of the study of the $p$-adic Lambert function.

\end{document}